\newtheorem{sat}{{\sc Theorem}}[section]
\newtheorem{lem}[sat]{{\sc Lemma} }
\newtheorem{kor}[sat]{Corollary} 
\newtheorem{defi}[sat]{Definition}   
\newcounter{saveeqn}
\title{H-Regular Borel measures on locally compact abelian  groups}
\date{}
\begin{document}
\author{{\bf {\footnotesize BY}}\\ 
{\bf {\normalsize L. Klotz}}~{\sc (Leipzig)}\\
{\footnotesize AND} {\bf {\normalsize J.M. Medina}}~{\sc (Buenos Aires)}}
\maketitle
\noindent
{\it Abstract.} {\small 
Let $G$ be an LCA group, $H$ a closed subgroup, $\varGamma$ the dual group of $G$. In accordance with analogous notions in prediction theory 
the classes of $H$-regular and $H$-singular Borel measures on $\Gamma$ are defined. A characterization of $H$-regular measures is given and
a Wold type decomposition is obtained. 
}\\[0.2cm]
{\it Keywords and phrases:} LCA groups, regular measure, $L^\alpha$-space, trigonometric approximation.\\[0.2cm]
{\it 2010 Mathematics Subject Classifications:} 43A15, 42A10, 43A05, 43A25.
\maketitle

\setcounter{section}{0} 
\section{Introduction}
\noindent
Let $G$ be an LCA group, i.~e. a locally compact abelian group with Hausdorff topology, whose group operation is written additively. Let $H$ be a closed subgroup, $\pi_H $ the canonical homeomorphism from $G$ onto the factor group $G/ H=: \mathop{G}\limits^{\sim} $ and $\mathop{x}\limits^{\sim} := \pi_{H} (x)$ the equivalence class of $x\in G$.  
Denote  by $\varGamma$  the dual group of $G$, by $\langle{\gamma,x}\rangle$ the value of $\gamma\in \varGamma$ at $x \in G$, by  $ \varLambda:=\{\gamma \in \varGamma: \langle{\gamma,y}\rangle=1$ for all $y\in H \}$ the annihilator group of $H$, and the Borel subsets of $\varGamma$ by $\mathcal{B}(\varGamma)$ . Recall that the factor group $\mathop{G}\limits^{\sim}$ can be identified with the dual group of $\varLambda$ by setting $\langle{\lambda,\mathop{x}   \limits^{\sim}}\rangle: =\langle{\lambda, x}\rangle$, $\lambda\in\Lambda,\,\mathop{x}\limits^{\sim}\in \mathop{G}\limits^{\sim}$, where $x$ is an arbitrary element of $\pi_H ^{-1}(\mathop{x}\limits^{\sim})$.
  Throughout the present paper we shall assume that  the set  $\varLambda$ is finite or countably infinite.
If $S$ is a non-empty subset of $G$, a trigonometric $S$-polynomial or, more precisely, a trigonometric polynomial with frequencies from $S$ is a function $p:\Gamma \longrightarrow \mathbb{C}$, which is a finite sum of the form:
$$p(\cdot) = \sum\limits_{k} a_k \langle{\cdot ,s_k}\rangle\, $$
$a_k \in\mathbb{C}$, $s_k \in S$. By $\mathbf{P}(S)$ denote the linear space of all trigonometric $S$-polynomials. If $x_1$ and $x_2$ belong 
to the same $H$-coset, then $\mathbf{P}(x_1 +H)=\mathbf{P}(x_2 +H)$, and we set $\mathbf{P}(x+H)=:\mathbf{P}(\mathop{x}\limits^{\sim})$, $x\in G$.

Let $\mu$ be a regular finite  non-negative  measure on the Borel $\sigma$-algebra $\mathcal{B}(\varGamma)$ (a measure on $\mathcal{B}(\varGamma)$ in short). Here the notion of regularity means that for all $B\in\mathcal{B}(\varGamma)$ and all $\epsilon >0$ there exists a compact set $K$ and an open subset $U$ of $\varGamma$ such that $K\subset B \subset U$ and $\mu (U\setminus K)<\epsilon$ \cite{Cohn80}.  If $\alpha \in (0,\infty)$, let $L^\alpha(\mu)$ be the metric space
of ($\mu$-equivalence classes of) Borel measurable $\mathbb{C}$-valued functions on $\varGamma$, which are $\alpha$-integrable with respect to $\mu$. 
The closure of a subset $\mathcal{L} \subset L^{\alpha} (\mu)$ with respect to the metric of $L^{\alpha} (\mu)$ is denoted by $C_{\alpha}\mathcal{L}$.
For $\alpha \in (1,2]$, 
the space $L^\alpha(\mu)$ can be interpreted  as the spectral domain of a harmonizable symmetric $\alpha$-stable process on $G$ \cite{Weron85}, particularly, 
$L^2(\mu)$ is the spectral domain of a certain stationary Gaussian process. Motivated by  the corresponding definitions in prediction theory of such processes we introduce the following notions.
\begin{defi}\label{d1}
A measure $\mu$ on $\mathcal{B}(\varGamma)$ is $H$-{\bf{regular}} if and only if:
$$\mathop{\bigcap}\limits_{x\in G} C_{\alpha}\mathbf{P}(x+H)= \mathop{\bigcap}\limits_{\mathop{x}\limits^{\sim}\in \mathop{G}\limits^{\sim}} C_{\alpha} \mathbf{P}(\mathop{x}\limits^{\sim})=\{0\}\,.$$
It is called $H$-{\bf{singular}} if $C_{\alpha}\mathbf{P} (\mathop{x}\limits^{\sim})=L^{\alpha} (\mu)$ for all $\mathop{x}\limits^{\sim}\in \mathop{G}\limits^{\sim}$.
\end{defi}
$H$-singularity is closely related to Whittaker-Shannon-Kotelnikov sampling problems. In the case $\alpha=2$ similar notions are studied in \cite{Lloyd59} and \cite{Lee78}  in the context of sampling of wide sense stationary random processes in $\mathbb{R}$ and certain classes of finite variance random processes indexed over LCA groups, respectively. These sampling problems are, in some sense, equivalent to find some completeness conditions for certain systems of trigonometric polynomials in $L^2 (\mu)$, where $\mu$ can be regarded as the spectral measure associated to certain classes of finite variance random processes. These  results are generally stated as conditions on the translates of the support of  the measure  $\mu$. Other examples of this principle can be seen in \cite{Kluva65} and \cite{Bea07}. A more general result in this direction which will be useful later is the following:  
 
\begin{sat}\label{t1}(\cite{Klotz16}, Theorem 4.5)
A measure $\mu$ on $\mathcal{B}(\varGamma)$ is $H$-singular if and only if there exists a set $B\in\mathcal{B}(\varGamma)$ such that
$\mu(\varGamma \setminus B)=0$ and $B\bigcap (\lambda +B)=\varnothing$ for all $\lambda \in \Lambda \setminus \{0\}$.
\end{sat}
The preceding theorem shows that the notion of $H$-singularity does not depend on $\alpha$. We shall prove that the notion of $H$-regularity is
independent of $\alpha$ as well. Therefore the formal dependence of definition \ref{d1} on the parameter $\alpha$ can be ignored.
Generalizing a method of \cite{Klotz05}, which was applied to  $G=\mathbb{Z}$ and $H=n\mathbb{Z}$, $n\in\mathbb{N},$ we shall describe the set of all $H$-regular measures in section \ref{sec2} of this note. On the other hand, Wold type decompositions is a classic topic  in approximation and prediction  theory \cite{Gren57} and in abstract Hilbert space theory up to the present e.g. \cite{Ezza15}. In this context this prompts to obtain a Wold type decomposition of an arbitrary measure  on
$\mathcal{B}(\varGamma)$, see theorem \ref{t3} below. Finally, theorem \ref{t4} is devoted to the characterization of those measures, for which the spaces
$\mathbf{P}(\mathop{x}   \limits^{\sim})$, are pairwise orthogonal in $L^2 (\mu)$.
We mention that the assertion of theorem \ref{t1} remains true if the condition that $\Lambda$ is countable is replaced by alternative assumptions on
$\Lambda$, $\Gamma$ or $\mu$, \cite{Klotz16}. It would be of interest to obtain a characterization of $H$-regular measures under similar conditions.
\section{Characterization of $H$-regular measures}\label{sec2}
A subset $T$ of $\varGamma$ is called a transversal (with respect to $\Lambda$) if it meets each
$\Lambda$-coset just once. Note that $T$ is a transversal if and only if the set $\pi^{-1} _{\Lambda}(\pi_{\Lambda} (T))=\mathop{\bigcup}\limits_{\lambda\in\Lambda} (T +\lambda)$ is equal to $\Gamma$ and $T \cap (T+\lambda) =\varnothing$ for all $\lambda \in \Lambda\setminus\{0\}$, (\cite{Klotz16}, Lemma 3.3).
Since $\Lambda$ is discrete and, hence, metrizable, by theorem 1 of \cite{Feld68} there exists a transversal, which belongs to $\mathcal{B}(\varGamma)$.
Let $\mu$ be a measure on $\mathcal{B}(\varGamma)$ and $T\in \mathcal{B}(\varGamma)$ be a transversal, for $\lambda \in \Lambda$, define a measure
$\mu_{\lambda}$ by $\mu_{\lambda}(B):=\mu (B\cap (T+\lambda))$, a measure $\nu_{\lambda}(B)=\mu_{\lambda}(\lambda + B)$, for $B\in\mathcal{B}(\varGamma)$, and set $\nu:= \sum\limits_{\lambda\in\Lambda} \nu_{\lambda}$. All measures just defined are, indeed, regular finite non-negative measures on $\mathcal{B}(\varGamma)$ (\cite{Klotz16}, Lemma 2.1). Note that $\nu(\varGamma)=\nu(T)$ .\hfill\\
If $\mathop{x}   \limits^{\sim} \in \mathop{G}   \limits^{\sim}$  and $\varphi$ is a $\mathbb{C}$-valued function on $\varGamma$, define a function
$ V_{\mathop{x}   \limits^{\sim}} \varphi$ by $(V_{\mathop{x}   \limits^{\sim}} \varphi)(\gamma):= \langle{\lambda, \mathop{x}   \limits^{\sim}}\rangle \varphi (\gamma -\lambda)$, $\gamma\in \lambda + T$ , $\lambda\in\Lambda$. If $S\subseteq \varGamma$, let $\mathbf{1}_{S}$ stand for the indicator function of $S$.
\begin{lem}\label{l1} Let $\alpha\in (0,\infty)$, for any $\mathop{x}   \limits^{\sim}\in \mathop{G}   \limits^{\sim}$, the map $V_{\mathop{x}   \limits^{\sim}}$ establishes an isometric isomorphism between $L^{\alpha}(\nu)$ and the subspace $C_{\alpha} \mathbf{P}(\mathop{x}   \limits^{\sim}) $ of $L^{\alpha} (\mu)$ satisfying:
\begin{equation}\label{e1}
V_{\mathop{x}   \limits^{\sim}} p =p,
\end{equation}  
for all $p \in \mathbf{P}(\mathop{x}   \limits^{\sim})$, and
\begin{equation}\label{e2}
V^{-1} _{\mathop{x}   \limits^{\sim}} f =f\mathbf{1}_T \in L^{\alpha}(\nu),
\end{equation}  
for all $f\in C_{\alpha} \mathbf{P}(\mathop{x}   \limits^{\sim})$.
\end{lem}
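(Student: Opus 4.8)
The plan is to verify, in turn, that $V_{\tilde x}$ is isometric, that it satisfies \eqref{e1}, that its (automatically closed) range is exactly $C_\alpha\mathbf P(\tilde x)$, and finally \eqref{e2}.

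First I would establish the isometry by a direct computation. Since $T$ is a transversal, $\varGamma$ is the disjoint union of the translates $T+\lambda$, $\lambda\in\varLambda$, so for $\varphi\in L^\alpha(\nu)$,
\[
\int_\varGamma |V_{\tilde x}\varphi|^\alpha\,d\mu=\sum_{\lambda\in\varLambda}\int_{T+\lambda}|\langle\lambda,\tilde x\rangle|^\alpha\,|\varphi(\gamma-\lambda)|^\alpha\,d\mu(\gamma).
\]
Using $|\langle\lambda,\tilde x\rangle|=1$, the definition $\mu_\lambda(B)=\mu(B\cap(T+\lambda))$, and the fact that $\nu_\lambda$ is the image of $\mu_\lambda$ under $\gamma\mapsto\gamma-\lambda$ (which is precisely $\nu_\lambda(B)=\mu_\lambda(\lambda+B)$), each summand equals $\int|\varphi|^\alpha\,d\nu_\lambda$; summing gives $\int|\varphi|^\alpha\,d\nu$. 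Applied to $\varphi-\psi$ this shows $V_{\tilde x}$ preserves the metric of $L^\alpha$ (the $\alpha$-th power of the norm for $\alpha\ge1$, the metric itself for $\alpha\in(0,1)$). In particular $V_{\tilde x}$ is injective, and since $L^\alpha(\nu)$ is a complete metric space its range is a closed subspace of $L^\alpha(\mu)$.

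Next I would prove \eqref{e1}. For $\lambda\in\varLambda$ and $h\in H$ one has $\langle\lambda,h\rangle=1$, so for $p\in\mathbf P(\tilde x)$, say $p=\sum_k a_k\langle\cdot,x+h_k\rangle$ with $h_k\in H$, the factorization $\langle\lambda+t,x+h_k\rangle=\langle\lambda,\tilde x\rangle\langle t,x+h_k\rangle$ yields $p(\lambda+t)=\langle\lambda,\tilde x\rangle\,p(t)$ for $t\in T$; comparing with the definition of $V_{\tilde x}$ gives $V_{\tilde x}p=p$. Consequently $\mathbf P(\tilde x)$ lies in the range of $V_{\tilde x}$, and since that range is closed, $C_\alpha\mathbf P(\tilde x)$ is contained in it.

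The main obstacle is the reverse inclusion, namely that every element of the range lies in $C_\alpha\mathbf P(\tilde x)$; equivalently, via the isometry, that $\{p\mathbf 1_T:p\in\mathbf P(\tilde x)\}$ is dense in $L^\alpha(\nu)$. Multiplication by the unimodular function $\langle\cdot,x\rangle$ is an isometry of $L^\alpha(\nu)$, so by the factorization above this reduces to the density of $\{r\mathbf 1_T:r\in\mathbf P(H)\}$. Now the characters $\langle\cdot,h\rangle$, $h\in H$, are constant on $\varLambda$-cosets, and under the identification of $T$ with $\varGamma/\varLambda\cong\widehat H$ the family $\mathbf P(H)|_T$ becomes the space of all trigonometric polynomials on $\widehat H$, while $\nu$ becomes a finite regular measure there. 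Hence the claim is exactly the density of trigonometric polynomials in $L^\alpha$ of a finite measure on an LCA group. For $\alpha\ge1$ I would argue by duality: a functional in $L^{\alpha'}(\nu)$ (in $L^\infty(\nu)$ when $\alpha=1$) annihilating all these characters produces a finite measure on $\widehat H$ with vanishing Fourier--Stieltjes transform, hence the zero measure, so no nonzero annihilator exists and the span is dense. For $0<\alpha<1$, where duality is unavailable, I would reduce to the case $\alpha=2$: bounded functions are dense in $L^\alpha(\nu)$, each such function is approximable in $L^2(\nu)$ by trigonometric polynomials, and Jensen's inequality for the finite measure $\nu$ converts $L^2$-approximation into $L^\alpha$-approximation. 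This completes the identification of the range with $C_\alpha\mathbf P(\tilde x)$. Finally, \eqref{e2} is immediate: taking $\lambda=0$ in the definition of $V_{\tilde x}$ shows that $(V_{\tilde x}\varphi)\mathbf 1_T=\varphi$ for $\varphi\in L^\alpha(\nu)$, so for $f=V_{\tilde x}\varphi\in C_\alpha\mathbf P(\tilde x)$ the inverse is restriction to $T$, that is $V_{\tilde x}^{-1}f=f\mathbf 1_T$.
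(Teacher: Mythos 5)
Your proof is correct, and on the isometry computation and on eq.~\ref{e1} it follows essentially the same route as the paper. The genuine divergence is in the two remaining steps. For the identification of the range of $V_{\mathop{x}\limits^{\sim}}$ with $C_{\alpha}\mathbf{P}(\mathop{x}\limits^{\sim})$, the paper settles the reverse inclusion in one line by applying theorem~\ref{t1} to $\nu$: since $\nu$ is concentrated on $T$ and $T\cap(T+\lambda)=\varnothing$ for $\lambda\in\varLambda\setminus\{0\}$, the measure $\nu$ is $H$-singular, hence $\mathbf{P}(\mathop{x}\limits^{\sim})$ is dense in $L^{\alpha}(\nu)$. You instead prove this density from scratch: reduction to $\mathbf{P}(H)$ via the unimodular factor $\langle\cdot,x\rangle$, transfer of $(T,\nu)$ to a finite regular measure on $\varGamma/\varLambda\cong\widehat{H}$, Fourier--Stieltjes uniqueness plus Hahn--Banach for $\alpha\ge 1$, and truncation plus H\"older (finite measure) to reduce $\alpha\in(0,1)$ to $\alpha=2$. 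In effect you re-prove exactly the special case of theorem~\ref{t1} that the lemma needs, which makes the lemma self-contained (no appeal to \cite{Klotz16}) but longer; the one step you assert rather than prove is the transfer itself, namely that density of the polynomials upstairs on $\widehat{H}$ yields density in $L^{\alpha}(\nu)$, which requires every Borel function on $T$ to be $\nu$-a.e.\ of the form $q\circ\pi_{\varLambda}$, or else a direct argument on $\varGamma$ using inner regularity of $\nu$ and the transversal identity $\pi_{\varLambda}^{-1}(\pi_{\varLambda}(K))\cap T=K$ for compact $K\subseteq T$; this is fillable, and it is precisely the kind of technicality the paper's citation outsources. Finally, for eq.~\ref{e2} your argument is actually simpler and cleaner than the paper's: you read off $(V_{\mathop{x}\limits^{\sim}}\varphi)\mathbf{1}_{T}=\varphi$ directly from the $\lambda=0$ case of the definition, whereas the paper extracts $\mu$-a.e.\ and $\nu$-a.e.\ convergent subsequences of an approximating sequence of polynomials (eqs.~\ref{e3}--\ref{e5}) and compares the limits.
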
 
\begin{proof}
Let $\varphi\in L^{\alpha}(\nu)$ . It is clear that $V_{\mathop{x}   \limits^{\sim}} \varphi$ is $(\mathcal{B}(\varGamma),\mathcal{B}(\mathbb{C}))$-measurable. Moreover,
$$\int\limits_{\varGamma} |(V_{\mathop{x}   \limits^{\sim}} \varphi)(\gamma)|^{\alpha} d\mu(\gamma) = \sum\limits_{\lambda\in\Lambda} \int\limits_{\lambda + T} |(V_{\mathop{x}   \limits^{\sim}} \varphi)(\gamma)|^{\alpha} d\mu(\gamma)=$$
$$  \sum\limits_{\lambda\in\Lambda} \int\limits_{\lambda + T} |\langle{\lambda, \mathop{x}   \limits^{\sim}}\rangle \varphi (\gamma -\lambda)|^{\alpha} d\mu(\gamma) = \sum\limits_{\lambda\in\Lambda} \int\limits_{ T} | \varphi(\gamma)|^{\alpha} d\nu_{\lambda}(\gamma)=$$
$$\int\limits_{ T} | \varphi(\gamma)|^{\alpha} d\nu(\gamma)= \int\limits_{ \varGamma} | \varphi(\gamma)|^{\alpha} d\nu (\gamma)\,,$$
which shows that $V_{\mathop{x}   \limits^{\sim}}$ is an isometry from $L^{\alpha}(\nu)$ into $L^{\alpha}(\mu)$. If $p\in\mathbf{P}(\mathop{x}   \limits^{\sim})$,
$p(\cdot)=\sum\limits_k a_k \langle{\cdot,x+y_k}\rangle$, where $x\in \pi_{\Lambda} ^{-1}(\{\mathop{x}   \limits^{\sim}\})$, $a_k\in\mathbb{C}$, $y_k \in H$, then for for $\gamma\in \lambda + T $, one has 
$$(V_{ \mathop{x}   \limits^{\sim}}p)(\gamma)=\sum\limits_k a_k \langle{\lambda,x}\rangle \langle{\gamma-\lambda, x+y_k }\rangle=\sum\limits_k a_k \langle{\gamma, x+y_k }\rangle =p(\gamma),\,\lambda\in\Lambda,$$
which yields eq. \ref{e1} since $\varGamma=\mathop{\bigcup}\limits_{\lambda\in\Lambda} (\lambda +T)$. Consequently $C_{\alpha} \mathbf{P}(\mathop{x}   \limits^{\sim})\subseteq V_{\mathop{x}   \limits^{\sim}}\left ({L^{\alpha}(\mu)}\right) $. The opposite inclusion is true as well since from theorem \ref{t1} it follows that $ \mathbf{P}(\mathop{x}   \limits^{\sim})$ is dense in $L^{\alpha}(\nu)$. Thus, the range of $V_{ \mathop{x}   \limits^{\sim}}$  is equal to $C_{\alpha} \mathbf{P}(\mathop{x}   \limits^{\sim})$.\hfill\\
If $(p_j)_{j\in\mathbb{N}}$ is a sequence of $\mathbf{P}(\mathop{x}   \limits^{\sim})$ tending to $f$ in $L^{\alpha}(\mu)$, then
$(V^{-1} _{\mathop{x}   \limits^{\sim}} p_j )_{j\in\mathbb{N}}$ tends to $V^{-1} _{\mathop{x}   \limits^{\sim}} f$ in $L^{\alpha} (\nu)$.
Choosing an appropriate subsequence, we can suppose that
\begin{equation}\label{e3}
\mathop{\rm{lim}}\limits_{j\longrightarrow\infty} p_j = f\;\;\mu-a.e.
\end{equation}
and
\begin{equation}\label{e4}
\mathop{\rm{lim}}\limits_{j\longrightarrow\infty} V^{-1} _{\mathop{x}   \limits^{\sim}} p_j = V^{-1} _{\mathop{x}   \limits^{\sim}} f\;\;\nu-a.e.
\end{equation}
Relation \ref{e3} and the definition of $\nu$ imply that
\begin{equation}\label{e5}
\mathop{\rm{lim}}\limits_{j\longrightarrow\infty}  p_j \mathbf{1}_{T}= f\mathbf{1}_T \;\;\nu-a.e.
\end{equation}
Taking into account eq. \ref{e1}, we get eq. \ref{e2} by eqs. \ref{e4},\ref{e5}.
\end{proof}
For $\lambda\in\Lambda$, let $h_{\lambda}$ be the Radon-Nikodym derivative of $\nu_{\lambda}$ with respect to $\nu$. We can assume that
$h_{\lambda}$ is a $(\mathcal{B}(\varGamma),\mathcal{B}([0,\infty)))$-measurable function and $h_{\lambda} =0$ on $\varGamma\setminus T$. Note that
\begin{equation}\label{e6}
\sum\limits_{\lambda\in\Lambda} h_{\lambda} =1 \;\;\;\nu-a.e.
\end{equation}
\begin{sat}\label{t2} A measure $\mu$ on $\mathcal{B}(\varGamma)$ is $H$-regular if and only if 
\begin{equation}\label{e7}h_{\lambda}<1\;\;\;\nu-a.e. 
\end{equation}
for all
$\lambda\in\Lambda$.

\end{sat}
\begin{proof}
If eq. \ref{e7} is not satisfied, there exists $\kappa \in\Lambda$ and $B\in\mathcal{B}(\varGamma)$ such that $\nu(B)>0$ and
$h_{\lambda} =1$ on $B$. If $ \mathop{x}   \limits^{\sim} \in\mathop{G}   \limits^{\sim}$, the function $\varphi:= \langle{-\kappa, \mathop{x}   \limits^{\sim}}\rangle\mathbf{1}_{B}$ is a non-zero element of $L^{\alpha}(\nu)$ and 
$$V_{\mathop{x}   \limits^{\sim}} \varphi =\sum\limits_{\lambda\in\Lambda} \langle{\lambda -\kappa, \mathop{x}   \limits^{\sim}}\rangle\mathbf{1}_{\lambda+B}\,,$$
by definition of $V_{\mathop{x}   \limits^{\sim}}$. If $\lambda\in \Lambda\setminus \{\kappa\}$, eq. \ref{e6}
yields $h_{\lambda}=0$ $\nu$-a.e. on $B$, which implies that $\mu (\lambda +B)=\mu_{\lambda}(\lambda +B)=0$, hence $V_{\mathop{x}   \limits^{\sim}} \varphi =\mathbf{1}_{\kappa +B}$ in $L^{\alpha}(\mu)$. By lemma \ref{l1} the function $\mathbf{1}_{\kappa +B}$ is a non-zero element of
$C_{\alpha}\mathbf{P}(\mathop{x}   \limits^{\sim})$, $\mathop{x}   \limits^{\sim} \in \mathop{G}   \limits^{\sim}$, a contradiction to $H$-regularity. Now assume that eq. \ref{e7} is satisfied and $f\in \mathop{\bigcap}\limits_{\mathop{x}   \limits^{\sim}\in \mathop{G}   \limits^{\sim}} C_{\alpha}\mathbf{P}(\mathop{x}   \limits^{\sim}) $ . Let $\mathop{x}   \limits^{\sim} \in \mathop{G}   \limits^{\sim}$, from the definition of
$V_{\mathop{x}   \limits^{\sim}}$ and eq. \ref{e4} it follows that for all $\lambda,\kappa \in\Lambda$ one has  
$$\langle{\lambda , -\mathop{x}   \limits^{\sim}}\rangle f(\lambda + \gamma)=f(\gamma)=\langle{\kappa ,-\mathop{x}   \limits^{\sim}}\rangle f(\kappa + \gamma)\,,$$
for $\nu$-almost all $\gamma \in T$. Since $\Lambda$ is countable, there exists a set $B_{\mathop{x}   \limits^{\sim}} \in\mathcal{B}(\varGamma)$, such that $B_{\mathop{x}   \limits^{\sim}} \subset T$, $\nu (\varGamma \setminus B_{\mathop{x}   \limits^{\sim}})=0$, and
\begin{equation}\label{e8}\langle{\lambda , -\mathop{x}   \limits^{\sim}}\rangle f(\lambda + \gamma)=f(\gamma)=\langle{\kappa , -\mathop{x}   \limits^{\sim}}\rangle f(\kappa + \gamma)
\end{equation}
for all $\lambda,\kappa\in{\Lambda}$, and all $\gamma \in B_{\mathop{x}   \limits^{\sim}}$. Let 
$B_{\lambda}:=\{\gamma\in T:\; h_{\lambda}(\gamma)\neq 0\}$, $B_{\lambda\,\kappa}:=B_{\lambda} \cap B_{\kappa}$, $\lambda,\kappa \in \Lambda$, $\lambda\neq \kappa$, $B=\mathop{\bigcup}\limits_{\lambda,\,\kappa\in \Lambda; \lambda\neq \kappa} B_{\lambda\,\kappa}$, and note that eq. \ref{e7} is equivalent to
the condition 
\begin{equation}\label{e9}
\nu(\varGamma\setminus B)=0.
\end{equation}
 For $\lambda,\kappa \in\Lambda$, $\lambda\neq \kappa$, choose $\mathop{x}   \limits^{\sim} \in \mathop{G}   \limits^{\sim}$ such that 
 $\langle{\lambda , -\mathop{x}   \limits^{\sim}}\rangle \neq \langle{\kappa , -\mathop{x}   \limits^{\sim}}\rangle$ and define 
 $B'_{\lambda\,\kappa} := B_{\lambda\,\kappa} \cap B_{\mathop{0}   \limits^{\sim}} \cap B_{\mathop{x}   \limits^{\sim}}$.
 If $\gamma\in B'_{\lambda\,\kappa}$, from eq. \ref{e8} one obtains the following homogeneous linear system of equation with respect to the
 unknowns $f(\lambda +\gamma)$ and $f(\kappa +\gamma)$:
 $$\begin{cases} f(\lambda +\gamma) - f(\kappa+\gamma)=0 \\
\langle{\lambda , -\mathop{x}   \limits^{\sim}}\rangle f(\lambda +\gamma) - \langle{\kappa, -\mathop{x}   \limits^{\sim}}\rangle f(\kappa+\gamma)=0 \end{cases}\,.$$ 
Since the coefficient matrix of this system is invertible, it follows $f(\lambda +\gamma)=f(\kappa +\gamma)=f(\gamma)=0$. Since $\nu(B'_{\lambda\,\kappa})=\nu(B_{\lambda\,\kappa})$ and the set $\Lambda$ is countable, we can conclude from eq. \ref{e9} that $f\mathbf{1}_T =0$ $\nu$-a.e. hence $f=0$
in $L^{\alpha} (\mu)$ by eq. \ref{e2}.  
 \end{proof}
From the description of $H$-regular and $H$-singular measures respectively one can easily derive a Wold type decomposition of any measure on $\mathcal{B}(\varGamma)$.
\begin{sat}\label{t3}
Any measure on $\mathcal{B}(\varGamma)$ can be decomposed into a sum of an $H$-regular measure $\mu_{\rho}$ and an $H$-singular measure 
$\mu_{\sigma}$. For $\alpha\in (0,\infty)$, the spaces $L^{\alpha}(\mu_{\rho})$ and $L^{\alpha}(\mu_{\sigma})$ can be identified with subspaces of
$L^{\alpha}(\mu)$ of the form $\mathbf{1}_{B_{\rho}}L^{\alpha}(\mu)$ and  $\mathbf{1}_{B_{\sigma}}L^{\alpha}(\mu)$, respectively, where $B_{\rho},B_{\sigma} \in \mathcal{B}(\varGamma)$. Then $L^{\alpha}(\mu)=L^{\alpha}(\mu_{\rho}) \oplus L^{\alpha}(\mu_{\sigma})$ and $L^{\alpha}(\mu_{\sigma})= \mathop{\bigcap}\limits_{\mathop{x}   \limits^{\sim}\in \mathop{G}   \limits^{\sim}} C_{\alpha} \mathbf{P}(\mathop{x}   \limits^{\sim})$.
\end{sat}
\begin{proof}
Let $T\in\mathcal{B}(\varGamma)$ be a transversal and $h_{\lambda}$ the Radon Nikodym derivative of the corresponding measure $\nu_{\lambda}$ , $\lambda\in\Lambda$ , with respect to $\nu=\sum\limits_{\lambda\in\Lambda} \nu_{\lambda}$. Define 
$$B'_{\rho} :=\left\{{\gamma\in T:\; h_{\lambda}(\gamma)<1\;for\,all\,\lambda\in\Lambda}\right\}\,,$$
$$B'_{\sigma} :=\left\{{\gamma\in T:\; h_{\lambda}(\gamma)=1\;for\,some\,\lambda\in\Lambda}\right\}\,,$$
$B_{\rho}:=\pi^{-1} _{\Lambda} (\pi_{\Lambda} (B'_{\rho})) ,\,B_{\sigma}:=\pi^{-1} _{\Lambda} (\pi_{\Lambda} (B'_{\sigma}))$, and measures $\mu_{\rho}$, $\mu_{\sigma}$ by:
$$\mu_{\rho} (B):= \mu(B\cap B_{\rho})\,,\;\mu_{\sigma} (B):= \mu(B\cap B_{\sigma})\,,\;B\in\mathcal{B}(\varGamma) \,.$$
Generalizing the arguments of (\cite{Klotz05}, pp. 296-297) in a straightforward way, one can show that the measures $\mu_{\rho}$ and $\mu_{\sigma}$ have all the asserted properties. 
\end{proof}
Since the space $L^2 (\mu)$ is a Hilbert space, it arises the problem of characterizing those measures $\mu$, for which the linear spaces $C_2 \mathbf{P}(\mathop{x}   \limits^{\sim})$, $\mathop{x}   \limits^{\sim}\in \mathop{G}   \limits^{\sim}$ are pairwise orthogonal. 
\begin{sat}\label{t4}
Let $\mu$ be a measure  on $\mathcal{B}(\varGamma)$. Let $T\in\mathcal{B} (\varGamma)$ be a transversal and $h_{\lambda}$ the Radon-Nikodym derivative of the corresponding measure $\nu_{\lambda}$ with respect to $\nu=\sum\limits_{\lambda\in\Lambda} \nu_{\lambda}$. If $\Lambda$ consists of $n$ elements, $n\in\mathbb{N}$, then the spaces $C_2 \mathbf{P}(\mathop{x}   \limits^{\sim})$, $\mathop{x}   \limits^{\sim}\in \mathop{G}   \limits^{\sim}$, are pairwise orthogonal in $L^2 (\mu)$ if and only if $h_{\lambda}=\frac{1}{n}$, $\nu$-a.e., for every $\lambda\in\Lambda$.  If $\Lambda$ is countably infinite, then 
$C_2 \mathbf{P}(\mathop{x}   \limits^{\sim})$, $\mathop{x}   \limits^{\sim}\in \mathop{G}   \limits^{\sim}$, do not constitute a family of pairwise orthogonal subspaces of $L^2 (\mu)$.
\end{sat}
\begin{proof}

It is easy to see that for $\mathop{x}   \limits^{\sim}\in \mathop{G}   \limits^{\sim}$, the map $(U_{\mathop{x}   \limits^{\sim}} f)(\gamma):= \langle{\lambda, -\mathop{x}   \limits^{\sim}}\rangle f(\gamma)$, $\gamma \in \lambda +T$, $\lambda \in \Lambda$, is an isometric isomorphism of $L^2 (\mu)$. Therefore the linear spaces $C_2 \mathbf{P}(\mathop{x}   \limits^{\sim})$, $\mathop{x}   \limits^{\sim}\in \mathop{G}   \limits^{\sim}$, are pairwise orthogonal if and only if the space  $ \mathbf{P}(\mathop{0}   \limits^{\sim})= \mathbf{P}(H)$ is orthogonal to all spaces $\mathbf{P}(\mathop{x}   \limits^{\sim})$, $\mathop{x}   \limits^{\sim}\in \mathop{G}   \limits^{\sim}\setminus \{\mathop{0}   \limits^{\sim}\}$.. By lemma \ref{l1}, the spaces  $ \mathbf{P}(\mathop{0}   \limits^{\sim})$ and $\mathbf{P}(\mathop{x}   \limits^{\sim})$ are orthogonal if and only if:
$$0=\int\limits_{\varGamma} (V_{\mathop{0}   \limits^{\sim}} \varphi) (\gamma) \overline{(V_{\mathop{x}   \limits^{\sim}} \psi) (\gamma)} d\mu (\gamma)= \sum\limits_{\lambda\in\Lambda} \int\limits_{\lambda +T} \varphi (\gamma-\lambda) \overline{ \langle{\lambda,\mathop{x}   \limits^{\sim} }\rangle\psi(\gamma-\lambda)} d\mu_{\lambda} (\gamma)$$
$$=\sum\limits_{\lambda\in\Lambda} \int\limits_{T} \langle{\lambda,-\mathop{x}   \limits^{\sim} }\rangle \varphi (\gamma) \overline{ \psi(\gamma)} d\nu_{\lambda} (\gamma)= \int\limits_{T}  \varphi (\gamma) \overline{ \psi(\gamma)}\left({\sum\limits_{\lambda\in\Lambda}\langle{\lambda,-\mathop{x}   \limits^{\sim} }\rangle h_{\lambda} (\gamma)}\right) d\nu (\gamma)\,,$$
$\varphi,\psi \in L^2 (\nu)$, which is equivalent to the existence of a set $B_{\mathop{x}   \limits^{\sim}} \in \mathcal{B}(\varGamma)$ such that
$B_{\mathop{x} \limits^{\sim} } \subseteq T$, $\nu(\varGamma\setminus B_{\mathop{x} \limits^{\sim} })=0$ and
$$\sum\limits_{\lambda\in\Lambda}\langle{\lambda,-\mathop{x}   \limits^{\sim} }\rangle h_{\lambda} (\gamma) =0,\;\;\textrm{for all}\,\gamma\in B_{\mathop{x} \limits^{\sim} }\,.$$
If $\Lambda$ is finite, its dual group $\mathop{G}   \limits^{\sim}$ is finite as well. Setting $\mathop{\bigcap}\limits_{\mathop{x}   \limits^{\sim} \in \mathop{G}   \limits^{\sim}\setminus \{\mathop{0}   \limits^{\sim}\}} B_{\mathop{x}   \limits^{\sim}}$, we have $B\in\mathcal{B}(\varGamma)$, $B\subseteq T$, $\nu(\varGamma\setminus B)=0$, and for all $\gamma\in B$ and all $\mathop{x} \limits^{\sim} \in \mathop{G} \limits^{\sim}\setminus \{\mathop{0} \limits^{\sim}\} $,
\begin{equation}\label{e10}
\sum\limits_{\lambda\in\Lambda}\langle{\lambda,-\mathop{x}   \limits^{\sim} }\rangle h_{\lambda} (\gamma) =0\,.
\end{equation}
If $\Lambda$ contains exactly $n$ elements, $n\in\mathbb{N}$, then its dual group $\mathop{G}   \limits^{\sim}$ contains exactly $n$ elements and 
hence $\mathop{G}   \limits^{\sim}$ can be identified, for some $s\in\mathbb{N}$ with a discrete group $\mathbb{Z}(m_1)\times\dots\times\mathbb{Z}(m_s)$, where $m_j \in \mathbb{N}$, $\mathop{\Pi}\limits_{j=1}^s m_j =n$, and $\mathbb{Z}(m_j)$ denotes the group of integers $\{0,\dots,m_j -1\}$ with the group operation of addition modulo $m_j$. The dual group  $\Lambda$ of $\mathop{G}   \limits^{\sim}=\mathbb{Z}(m_1)\times\dots\times\mathbb{Z}(m_s)$ can be identified with the set of functions $\lambda$ of the form: 
$$\lambda((k_1,\dots,k_s))=\prod\limits_{j=1}^s exp\left({\frac{2\pi i k_j l_j}{m_j}}\right)\,,(k_1,\dots, k_s)\in \mathbb{Z}(m_1)\times\dots\times\mathbb{Z}(m_s)\,,$$
 where $(l_1,\dots, l_s)\in \mathbb{Z}(m_1)\times\dots\times\mathbb{Z}(m_s)$. It follows that for $\gamma\in B$, the system of eq. \ref{e10} can be written as a system of $\prod\limits_{j=1}^s m_j -1$ linear equations
 \begin{equation}\label{e11}
\sum\limits_{l_1=0}^{m_1 -1} \dots \sum\limits_{l_s =0}^{m_1 -1} \prod\limits_{j=1}^s exp\left({\frac{2\pi i k_j l_j}{m_j}}\right) h_{(l_1,\dots,l_s) }(\gamma)=0,
 \end{equation} 
$\,(k_1,\dots, k_s)\in \mathbb{Z}(m_1)\times\dots\times\mathbb{Z}(m_s)\setminus\{(0,\dots,0)\} \,,$
with respect to the  $\mathop{\Pi}\limits_{j=1}^s m_j =n$ unknown quantities $h_{(l_1,\dots,l_s) }(\gamma)$,$\,(l_1,\dots, l_s)\in \mathbb{Z}(m_1)\times\dots\times\mathbb{Z}(m_s)$. By eq. \ref{e6} we can assume that $\sum\limits_{\lambda\in\Lambda} h_{\lambda} (\gamma) =1$. Adding this equation to eqs. \ref{e11} as the first equation, we obtain a linear system, whose coefficient matrix is invertible since it is the tensor product of the $s$ invertible Vandermonde matrices   $\left({ exp\left({\frac{2\pi r t}{m_j}}\right)}\right)_{r,t=0,\dots,m_j -1}$, $j\in\{1,\dots,s\}$, thus the system has a unique solution.
If $k_j \neq 0$ for some $j\in\{1,\dots,s\}$, then $\sum\limits_{l_j=0}^{m_j-1} exp\left({\frac{2\pi i k_j l_j}{m_j}}\right)=0$, hence
$$\sum\limits_{l_1=0}^{m_1 -1} \dots \sum\limits_{l_s =0}^{m_1 -1} \prod\limits_{j=1}^s exp\left({\frac{2\pi i k_j l_j}{m_j}}\right) =0\,,$$
which implies that  $h_{(l_1,\dots,l_s) }(\gamma)=\frac{1}{n}$ for all $\,(l_1,\dots, l_s)\in \mathbb{Z}(m_1)\times\dots\times\mathbb{Z}(m_s)$. Using the initial notation: $h_{\lambda} (\gamma)=\frac{1}{n}$ for all $\lambda\in\Lambda$ is a solution.\hfill\\
If $\Lambda$ is countably infinite, it is discrete and $\sigma$-compact. Therefore its dual group is compact and metrizable , cf. (\cite{Hewitt63}, Theorem 23.17) and (\cite{Morris77}, Theorem 29), respectively. It follows that $\mathop{G}   \limits^{\sim}$ is separable. Let $\mathop{D}   \limits^{\sim}$ be a countable dense subset of $\mathop{G}   \limits^{\sim}$. Since $\mathop{G}   \limits^{\sim}$ is not discrete, we can require that
$\mathop{0}   \limits^{\sim} \notin \mathop{D}   \limits^{\sim}$. The set $A:=\mathop{\bigcap}\limits_{\mathop{x}   \limits^{\sim}\in \mathop{D}   \limits^{\sim}} B_{\mathop{x}   \limits^{\sim}} $ belong to $\mathcal{B}(\varGamma)$, $A\subseteq T$, $\nu(\varGamma \setminus A)=0$ and from eq. \ref{e10} it follows that 
$$\sum\limits_{\lambda\in\Lambda}\langle{\lambda,-\mathop{x}   \limits^{\sim} }\rangle h_{\lambda} (\gamma) =0$$ 
for all $\gamma\in A$ and $\mathop{x}   \limits^{\sim} \in \mathop{D}   \limits^{\sim}$. For $\gamma\in A$, the left-hand side of the preceding equality is the value of  the Fourier transform of the function $\lambda \longmapsto h_{\lambda} (\gamma)$, $\lambda\in \Lambda$, at the point $\mathop{x}   \limits^{\sim} \in \mathop{D}   \limits^{\sim}$. Since $\mathop{D}   \limits^{\sim}$ is dense in $\mathop{G}   \limits^{\sim}$, it follows  that the Fourier transform of the function $\lambda \longmapsto h_{\lambda} (\gamma)$ is identically 0, hence, the function itself is $0$ a.e. with respect to the Haar measure on $\Lambda$. Since $\Lambda$ is discrete, we get $h_{\lambda} (\gamma)=0$ for all $\lambda\in\Lambda$ and all $\gamma\in A$, which implies that
$\nu$ and, hence, $\mu$ is the zero measure.
	\end{proof}
	The condition $h_{\lambda}=\frac{1}{n}$ $\nu$-a.e. , $\lambda\in\Lambda$, means that $\mu(B)=\mu(\lambda+B)$ for all $\lambda\in\Lambda$ and
	$B\in\mathcal{B}(\varGamma)$, $B\subseteq T$, i.e., the measure $\mu$ is periodic with respect to $\Lambda$ and $T$. Thus the following corollary of theorem \ref{t4} can be stated.
	\begin{kor}\label{c1} Let $\Lambda$ be a finite subgroup of an LCA group $\varGamma$ and $\mu$ a measure on $\mathcal{B}(\varGamma)$.
	If there exists a transversal $T$ such that $\mu$ is periodic with respect to $\lambda$ and $T$, then $\mu$ is periodic with respect to $\Lambda$ and any transversal.  
	\end{kor}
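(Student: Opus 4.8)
The plan is to replace the transversal-dependent hypothesis by a transversal-free one. Concretely, I would show that periodicity of $\mu$ with respect to $\Lambda$ and a \emph{single} transversal $T$ already forces the genuine $\Lambda$-invariance $\mu(\lambda+C)=\mu(C)$ for every $C\in\mathcal{B}(\varGamma)$ and every $\lambda\in\Lambda$. Once this is established the corollary is immediate, since $\Lambda$-invariance makes no reference to any transversal whatsoever.

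For the forward step I would use that, $T$ being a transversal, the translates $\{\kappa+T\}_{\kappa\in\Lambda}$ form a Borel partition of $\varGamma$ (exactly the characterization of transversals recalled at the start of this section). Given an arbitrary $C\in\mathcal{B}(\varGamma)$, set $B_\kappa:=(C-\kappa)\cap T\subseteq T$, so that $C\cap(\kappa+T)=\kappa+B_\kappa$ and $C=\bigsqcup_{\kappa\in\Lambda}(\kappa+B_\kappa)$. Periodicity with respect to $\Lambda$ and $T$ gives $\mu(\kappa+B_\kappa)=\mu(B_\kappa)$, whence $\mu(C)=\sum_{\kappa}\mu(B_\kappa)$ by countable additivity. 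Fixing $\lambda\in\Lambda$, the same partition yields $\lambda+C=\bigsqcup_{\kappa}\bigl((\lambda+\kappa)+B_\kappa\bigr)$, and since $\lambda+\kappa\in\Lambda$ periodicity again gives $\mu\bigl((\lambda+\kappa)+B_\kappa\bigr)=\mu(B_\kappa)$; summing produces $\mu(\lambda+C)=\sum_{\kappa}\mu(B_\kappa)=\mu(C)$. The only point requiring care is that $\Lambda$ is a group, so that the shifted index $\lambda+\kappa$ is again an admissible translation for the periodicity hypothesis.

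Finally, for any other transversal $T'$ and any Borel $B\subseteq T'$, $\Lambda$-invariance gives $\mu(\lambda+B)=\mu(B)$ for all $\lambda\in\Lambda$, which is precisely periodicity of $\mu$ with respect to $\Lambda$ and $T'$. I expect the reduction from periodicity-on-$T$ to full $\Lambda$-invariance to be the only substantive ingredient, and it is purely a bookkeeping argument with the tile decomposition; note that finiteness of $\Lambda$ is not actually needed here, countable additivity sufficing. Alternatively, one could argue more conceptually via Theorem \ref{t4}: by the remark preceding the corollary, periodicity with respect to $\Lambda$ and $T$ is equivalent to $h_\lambda=\tfrac1n$ $\nu$-a.e., which by Theorem \ref{t4} is equivalent to pairwise orthogonality of the spaces $C_2\mathbf{P}(\mathop{x}\limits^{\sim})$ in $L^2(\mu)$, a property of $\mu$ alone; reading the equivalence backwards for an arbitrary transversal $T'$ then returns periodicity with respect to $\Lambda$ and $T'$.
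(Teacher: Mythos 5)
Your proposal is correct, but your primary argument takes a genuinely different route from the paper's. The paper obtains the corollary with no new work at all: by the remark immediately preceding it, periodicity of $\mu$ with respect to $\Lambda$ and $T$ is the same as $h_{\lambda}=\frac{1}{n}$ $\nu$-a.e.\ for all $\lambda\in\Lambda$, which Theorem \ref{t4} shows is equivalent to pairwise orthogonality of the spaces $C_2\mathbf{P}(\mathop{x}\limits^{\sim})$ in $L^2(\mu)$ --- a property that makes no reference to any transversal --- and reading the equivalence backwards for a second transversal $T'$ finishes the proof. This is precisely the ``alternative'' you sketch in your closing sentences. Your main argument instead upgrades single-transversal periodicity to genuine $\Lambda$-invariance of $\mu$ on all of $\mathcal{B}(\varGamma)$, via the tile decomposition $C=\bigcup_{\kappa\in\Lambda}(\kappa+B_{\kappa})$ (a disjoint union, since $(T+\kappa)\cap(T+\kappa')=\varnothing$ for $\kappa\neq\kappa'$ by the transversal characterization recalled at the start of Section \ref{sec2}), $B_{\kappa}=(C-\kappa)\cap T$, together with the group property of $\Lambda$ and countable additivity; this is elementary, bypasses the $L^2$ and Radon--Nikodym machinery entirely, and, as you observe, needs only countability of $\Lambda$ rather than finiteness (for countably infinite $\Lambda$ the finiteness of $\mu$ then forces $\mu=0$, consistent with Theorem \ref{t4}). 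What the paper's route buys is economy and the explicit link between periodicity and orthogonality; what yours buys is a stronger, more transparent intermediate statement ($\Lambda$-invariance of $\mu$ as a measure) proved under weaker hypotheses. Both arguments are sound.
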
 
	\subsection{Characterization of the projection onto $C_2\mathbf{P}(\mathop{x}\limits^{\sim})$}
	Considering the Hilbert space $L^2(\mu)$, motivated by some interpolation problems, if $\mu$ is not $H$-singular, it is of practical interest to find explicit expressions for $P_{\mathop{x}\limits^{\sim}}$, the orthogonal projection onto $C_2\mathbf{P}(\mathop{x}\limits^{\sim})$, $\mathop{x}\limits^{\sim}\in \mathop{G}\limits^{\sim}$. A classical example when $G=\mathbb{R}$, $H=\mathbb{Z}$ can de found in cf. (\cite{Gren57}, sec.2.4) and the same notion appears in \cite{Lloyd59}, where a particular case of theorem \ref{t1} over $\mathbb{R}$ is studied and related to Whittaker-Shannon-Kotelnikov sampling expansions for wide sense stationary processes.\hfill\\
	For $\lambda \in \Lambda$ define a measure $\rho_{\lambda}(B):=\mu(\lambda +B)$, for $B\in\mathcal{B}(\varGamma)$ and set $\rho:=\sum\limits_{\lambda\in\Lambda} \rho_{\lambda}$. Note that $\rho_{0}=\mu$, for all $\lambda\in\Lambda$, $\;\rho(\cdot)=\rho(\cdot +\lambda)$ and $\rho_{\lambda}$ is absolutely continuous with respect to $\rho$, and that $\rho$ is $\sigma$-finite since $ \varLambda$ is assumed to be countable.
	 These measures are related by a technical lemma whose proof is immediate.
	\begin{lem}\label{l2} Let $\mu$ be a measure on $\mathcal{B}(\varGamma)$, $T\in\mathcal{B}(\varGamma) $ a transversal,  $g$  the Radon-Nikodym derivative of $\mu$ with respect to $\rho$, $f\in L^1(\mu)$ and $\kappa\in\Lambda$. Then:
	$$\int\limits_{\varGamma} f(\gamma)d\mu(\gamma)= \int\limits_{T+\kappa} \sum\limits_{\lambda\in\Lambda} f(\gamma+\lambda) g(\gamma +\lambda) d\rho(\gamma)\,.$$
	\end{lem}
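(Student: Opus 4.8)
The plan is to reduce the integral against $\mu$ to an integral against the $\Lambda$-invariant measure $\rho$, and then exploit that invariance together with the partition of $\varGamma$ into the disjoint translates $T+\lambda$, $\lambda\in\Lambda$. Since $\mu=\rho_0\le\rho$ we have $\mu\ll\rho$, and as $\rho$ is $\sigma$-finite the derivative $g=d\mu/d\rho$ exists, so that $\int_\varGamma f\,d\mu=\int_\varGamma fg\,d\rho$. Because $T$ is a transversal, $\varGamma=\bigcup_{\lambda\in\Lambda}(T+\lambda)$ is a disjoint union, and hence $\int_\varGamma fg\,d\rho=\sum_{\lambda\in\Lambda}\int_{T+\lambda}fg\,d\rho$.

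The next step is to pull each summand back to $T$. The stated invariance $\rho(\cdot)=\rho(\cdot+\lambda)$ is equivalent to the identity $\int_\varGamma\psi\,d\rho=\int_\varGamma\psi(\gamma+\lambda)\,d\rho(\gamma)$ for every $\rho$-integrable $\psi$. Applying this with $\psi=\mathbf{1}_{T+\lambda}fg$ and noting that $\mathbf{1}_{T+\lambda}(\gamma+\lambda)=\mathbf{1}_T(\gamma)$ gives $\int_{T+\lambda}fg\,d\rho=\int_T f(\gamma+\lambda)g(\gamma+\lambda)\,d\rho(\gamma)$. Summing over $\lambda$ and interchanging sum and integral then yields the asserted formula in the case $\kappa=0$:
\[
\int_\varGamma f\,d\mu=\int_T\sum_{\lambda\in\Lambda}f(\gamma+\lambda)g(\gamma+\lambda)\,d\rho(\gamma).
\]

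To obtain the formula for arbitrary $\kappa$, I would set $\Phi(\gamma):=\sum_{\lambda\in\Lambda}f(\gamma+\lambda)g(\gamma+\lambda)$ and observe that reindexing the sum by $\lambda\mapsto\lambda+\kappa$ shows $\Phi$ to be $\Lambda$-periodic, i.e. $\Phi(\gamma+\kappa)=\Phi(\gamma)$. Applying the invariance identity once more to $\psi=\mathbf{1}_{T+\kappa}\Phi$, together with $\mathbf{1}_{T+\kappa}(\gamma+\kappa)=\mathbf{1}_T(\gamma)$, gives $\int_{T+\kappa}\Phi\,d\rho=\int_T\Phi\,d\rho$; thus the right-hand side does not depend on $\kappa$ and the general case follows from the case $\kappa=0$.

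The one point that demands genuine care — and the reason the proof is merely \emph{immediate} rather than trivial — is the interchange of summation and integration, which I would justify by Tonelli's theorem. Running the entire computation with $|f|$ in place of $f$ produces $\sum_{\lambda\in\Lambda}\int_T|f(\gamma+\lambda)|\,g(\gamma+\lambda)\,d\rho(\gamma)=\int_\varGamma|f|\,d\mu<\infty$, since $f\in L^1(\mu)$. This both secures the absolute integrability needed for the interchange and shows that the $\rho$-a.e.\ ambiguity in the choice of $g$ propagates harmlessly under the translations $\gamma\mapsto\gamma+\lambda$, because any $\rho$-null set has $\rho$-null translates by the invariance of $\rho$.
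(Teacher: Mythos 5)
Your proof is correct, and since the paper declares this lemma's proof ``immediate'' and omits it, your argument is precisely the standard one being alluded to: Radon--Nikodym ($\mu\ll\rho$ via $\mu=\rho_0\le\rho$ and $\sigma$-finiteness of $\rho$), the disjoint decomposition $\varGamma=\bigcup_{\lambda\in\Lambda}(T+\lambda)$, the $\Lambda$-invariance of $\rho$ as a change of variables, and Tonelli with $|f|$ to justify the interchange. Your handling of the two points the paper glosses over --- the sum--integral interchange and the harmlessness of the $\rho$-a.e.\ ambiguity of $g$ under translation --- is exactly the care the word ``technical'' invites.
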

	
  \subsubsection{Remark} \label{rem1}
 If $p\in\mathbf{P}(\mathop{x}\limits^{\sim})$ and $\lambda\in\Lambda$, then $$p(\gamma +\lambda)=\sum\limits_{k} a_k \langle{\gamma +\lambda ,x+y_k}\rangle= \langle{\lambda,x}\rangle \sum\limits_{k} a_k \langle{\gamma ,x+ y_k}\rangle = \langle{\lambda,x}\rangle p(\gamma)\,,$$
 where $y_k \in H$. Therefore one can check that if $f\in L^{\alpha}(\mu)$ is such that $f(\cdot +\lambda)=\langle{\lambda\,,x}\rangle f(\cdot)$ $\mu$-a.e., then $f\in C_{\alpha} \mathbf{P}(\mathop{x}\limits^{\sim})$.
 A similar argument shows that $C_{\alpha} \mathbf{P}(\mathop{x}\limits^{\sim})=\langle{\cdot ,x}\rangle C_{\alpha} \mathbf{P}(\mathop{0}\limits^{\sim})$.
 \hfill\\
 The following theorem  gives an expression for the orthogonal projection of a function $f$ onto $C_2\mathbf{P}(\mathop{x}\limits^{\sim})$, $\mathop{x}\limits^{\sim}\in \mathop{G}\limits^{\sim}$. Its proof goes in a similar  vein to cf. (\cite{Sal69}, theorem 3.2) so some details are omitted. 
 \begin{sat}\label{t5} Let  $\mu$ be a measure on $\mathcal{B}(\varGamma)$, $T\in\mathcal{B}(\varGamma)$ a transversal and $g$ the Radon-Nikodym derivative of $\mu$ with respect to $\rho$.  Then for $\mathop{x}\limits^{\sim}\in \mathop{G}\limits^{\sim}$ and $f\in L^2 (\mu)$, the orthogonal projection $P_{\mathop{x}\limits^{\sim}}f$ of $f$ onto $C_2\mathbf{P}(\mathop{x}\limits^{\sim})$ is given by
 \begin{equation}\label{proj} 
 P_{\mathop{x}\limits^{\sim}} f= \sum\limits_{\lambda\in\Lambda} {\langle{-\lambda,x}\rangle} f(\cdot +\lambda) g(\cdot +\lambda)\;\;\rho-a.e\;(and\; then\,\mu-a.e. )
 \end{equation} 
 \end{sat}
 \begin{proof}
 Taking into account that $P_{\mathop{x}\limits^{\sim}} f(\cdot)= {\langle{\lambda,x}\rangle} (P_{\mathop{x}\limits^{\sim}} f) (\cdot -\lambda)$ $\mu$-a.e. and $\varphi (\cdot)= {\langle{\lambda,x}\rangle} \varphi(\cdot -\lambda) $ $\mu$-a.e., for $\lambda\in\Lambda$, $f\in L^2 (\mu)$, $\varphi \in C_2\mathbf{P}(\mathop{x}\limits^{\sim})$, for a fixed $\kappa\in\Lambda$, write the orthogonality condition on $P_{\mathop{x}\limits^{\sim}} f$:
 $$0=\int\limits_{\varGamma} ((P_{\mathop{x}\limits^{\sim}} f)(\gamma)-f(\gamma))\overline{\varphi (\gamma)} d\mu(\gamma)$$
 $$=\sum\limits_{\lambda\in\Lambda}\int\limits_{\kappa +T} ( P_{\mathop{x}\limits^{\sim}} f (\gamma) - \langle{-\lambda,x}\rangle f(\gamma+\lambda)) \overline{\varphi (\gamma)} g(\gamma +\lambda) d\rho (\gamma)\,.$$
 Since $\sum\limits_{\lambda\in\Lambda} g(\cdot +\lambda)=1$ $\rho$-a.e.,
the last equality  implies that
\begin{equation}\label{e12}\int\limits_{\kappa +T} (P_{\mathop{x}\limits^{\sim}} f)(\gamma)\overline{\varphi (\gamma)}  d\rho (\gamma)= \int\limits_{\kappa +T} \left({\sum\limits_{\lambda\in\Lambda}\langle{-\lambda,x}\rangle f(\gamma +\lambda) g(\gamma +\lambda) }\right)\overline{\varphi (\gamma)}  d\rho (\gamma)\,.
\end{equation}
The last interchange of the sum with the integral is justified since by lemma \ref{l2}:
\begin{equation}
\int\limits_{\kappa +T} {\sum\limits_{\lambda\in\Lambda}|P_{\mathop{x}\limits^{\sim}} f(\gamma)-\langle{-\lambda,x}\rangle f(\gamma +\lambda)|  }|{\varphi (\gamma)}| g(\gamma +\lambda) d\rho (\gamma)
\end{equation}
$$\leq\int\limits_{\varGamma} (|P_{\mathop{x}\limits^{\sim}} f(\gamma)|+|f(\gamma )|)  |{\varphi (\gamma)}| d\mu(\gamma) \leq 2 \left\|f\right\|_{L^2 (\mu)} \left\|\varphi\right\|_{L^2 (\mu)}<\infty\,.$$
In particular eq. \ref{e12} holds  if $\varphi\in C_2\mathbf{P}(\mathop{x}\limits^{\sim})$ in eq.\ref{e12} is taken as $$\varphi (\cdot)=\langle{\cdot ,x}\rangle \sum\limits_{\lambda\in\Lambda} \mathbf{1}_B (\cdot +\lambda)\,,$$ for any $B\subseteq T$, $B\in\mathcal{B}(\varGamma)$ and therefore 
eq. \ref{proj} holds $\rho$-a.e. on $T+\kappa$, for $\kappa\in\Lambda$. The desired result follows from this since $\varGamma =\mathop{\bigcup}\limits_{\lambda\in \Lambda} T+\lambda$.

 \end{proof}
 \begin{kor}
 Under the same hypothesis of theorem \ref{t5}, if $\alpha \in [1,\infty]$, eq. \ref{proj} defines a bounded projection from $L^{\alpha}(\mu)$ onto $C_{\alpha}\mathbf{P}(\mathop{x}\limits^{\sim})$.
\end{kor}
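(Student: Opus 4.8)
Write $P:=P_{\mathop{x}\limits^{\sim}}$ for the operator given by the right-hand side of eq. \ref{proj}. The plan is to establish, in order, three things: that $P$ is well defined on $L^{\alpha}(\mu)$ and bounded with operator norm at most $1$ for every $\alpha\in[1,\infty]$; that its range is contained in $C_{\alpha}\mathbf{P}(\mathop{x}\limits^{\sim})$; and that $P$ restricts to the identity on $C_{\alpha}\mathbf{P}(\mathop{x}\limits^{\sim})$, so that $P$ is idempotent and maps $L^{\alpha}(\mu)$ onto that subspace. I would use throughout the two facts already recorded in the proof of theorem \ref{t5}: that $g(\cdot+\lambda)$ is the Radon-Nikodym derivative of $\rho_{\lambda}$ with respect to $\rho$ (a consequence of the $\Lambda$-invariance of $\rho$), and that $\sum_{\lambda\in\Lambda} g(\cdot+\lambda)=1$ $\rho$-a.e., together with the change of variables of lemma \ref{l2}.

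For boundedness, fix $\alpha\in[1,\infty)$. Since the weights $g(\gamma+\lambda)\ge 0$ sum to $1$ over $\lambda\in\Lambda$, they define a probability distribution on $\Lambda$, and Jensen's inequality for the convex function $t\mapsto t^{\alpha}$ gives, $\rho$-a.e.,
\begin{equation*}
|(Pf)(\gamma)|^{\alpha}\le\Big(\sum_{\lambda\in\Lambda}|f(\gamma+\lambda)|\,g(\gamma+\lambda)\Big)^{\alpha}\le\sum_{\lambda\in\Lambda}|f(\gamma+\lambda)|^{\alpha}\,g(\gamma+\lambda).
\end{equation*}
In particular the defining series converges absolutely $\rho$-a.e., so $Pf$ is meaningful. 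A reindexing $\lambda'=\lambda+\kappa$ shows $(Pf)(\gamma+\kappa)=\langle\kappa,x\rangle(Pf)(\gamma)$ $\rho$-a.e., so $|Pf|$ is $\Lambda$-invariant; applying lemma \ref{l2} to the nonnegative integrand $|Pf|^{\alpha}$ therefore gives $\|Pf\|^{\alpha}_{L^{\alpha}(\mu)}=\int_{T}|Pf|^{\alpha}\,d\rho$, which by the displayed bound and a second application of lemma \ref{l2} (to $|f|^{\alpha}$) is at most $\int_{\Gamma}|f|^{\alpha}\,d\mu=\|f\|^{\alpha}_{L^{\alpha}(\mu)}$. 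The case $\alpha=\infty$ is even simpler: the triangle inequality and $\sum_{\lambda}g(\cdot+\lambda)=1$ give $\|Pf\|_{L^{\infty}(\mu)}\le\|f\|_{L^{\infty}(\mu)}$. Independence of $Pf$ from the representative of $f$ rests on the same observation: altering $f$ on a $\mu$-null set $N$ changes the term $f(\cdot+\lambda)g(\cdot+\lambda)$ only on $N-\lambda$, and there $g(\cdot+\lambda)$ vanishes $\rho$-a.e. because $\rho_{\lambda}(N-\lambda)=\mu(N)=0$.

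That $Pf\in C_{\alpha}\mathbf{P}(\mathop{x}\limits^{\sim})$ now follows from the relation $(Pf)(\cdot+\kappa)=\langle\kappa,x\rangle(Pf)(\cdot)$ and the Remark preceding theorem \ref{t5}. To see that $P$ fixes this space, I would first treat a polynomial $p\in\mathbf{P}(\mathop{x}\limits^{\sim})$, for which $p(\gamma+\lambda)=\langle\lambda,x\rangle p(\gamma)$ holds at every $\gamma$ — this is the genuine pointwise identity computed in the Remark — so that $(Pp)(\gamma)=p(\gamma)\sum_{\lambda}g(\gamma+\lambda)=p(\gamma)$. Since $\mathbf{P}(\mathop{x}\limits^{\sim})$ is dense in $C_{\alpha}\mathbf{P}(\mathop{x}\limits^{\sim})$ and $P$ is continuous, $P$ equals the identity on all of $C_{\alpha}\mathbf{P}(\mathop{x}\limits^{\sim})$. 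Together with the range inclusion this gives $P^{2}=P$ and $\mathrm{ran}\,P=C_{\alpha}\mathbf{P}(\mathop{x}\limits^{\sim})$, which is the assertion.

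I expect the only delicate point to be the verification that $P$ fixes $C_{\alpha}\mathbf{P}(\mathop{x}\limits^{\sim})$, and the reason I would route it through polynomials rather than through a general $f$ in this space is precisely the obstacle to watch for. A general such $f$ satisfies $f(\cdot+\lambda)=\langle\lambda,x\rangle f(\cdot)$ only $\mu$-a.e.; to substitute this into the formula for $Pf$ one would need $g(\cdot+\lambda)$ to vanish $\rho$-a.e. on the exceptional set, but that set is $\mu$-null only after translation by $\lambda$, and a translate of a $\mu$-null set need not be $\mu$-null. Reducing to polynomials, where the periodicity is an everywhere-valid identity, and then invoking the continuity of $P$, avoids this difficulty; the same phenomenon is what makes the representative-independence argument above non-automatic.
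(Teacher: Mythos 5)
Your proof is correct, and its core step---the boundedness---follows a genuinely different route from the paper's. The paper proves the contraction bound only for $\alpha=1$ and $\alpha=2$, and only for $f$ in $L^2(\mu)$; it then obtains $\alpha\in(1,2)$ from the Marcinkiewicz interpolation theorem and $\alpha\in(2,\infty)$ by duality. You instead observe that, $\rho$-a.e., the weights $g(\cdot+\lambda)$ form a probability distribution on $\Lambda$, so that Jensen's inequality, the $\Lambda$-invariance of $|P_{\mathop{x}\limits^{\sim}}f|$, and two applications of lemma \ref{l2} yield $\|P_{\mathop{x}\limits^{\sim}}f\|_{L^{\alpha}(\mu)}\le\|f\|_{L^{\alpha}(\mu)}$ for every $\alpha\in[1,\infty]$ in one stroke. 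This is more elementary (no interpolation or duality machinery), and it buys two things the paper's route leaves implicit: the series in eq. \ref{proj} converges absolutely $\rho$-a.e.\ for every $f\in L^{\alpha}(\mu)$, so the formula literally defines the operator on all of $L^{\alpha}(\mu)$ rather than merely a continuous extension from $L^2(\mu)$ (relevant for $\alpha<2$, where $L^{\alpha}(\mu)\supsetneq L^{2}(\mu)$); and independence of the chosen representative of $f$, via $g(\cdot+\lambda)=d\rho_{\lambda}/d\rho$. The remaining steps---range inclusion through the relation $(P_{\mathop{x}\limits^{\sim}}f)(\cdot+\kappa)=\langle\kappa,x\rangle (P_{\mathop{x}\limits^{\sim}}f)(\cdot)$ and remark \ref{rem1}, then idempotence and surjectivity by fixing polynomials pointwise and invoking density and continuity---coincide with the paper's, with your density argument made more explicit. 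Two side remarks: the obstacle described in your last paragraph can in fact be overcome directly, since if $E_{\lambda}$ is the exceptional set of $f(\cdot+\lambda)=\langle\lambda,x\rangle f(\cdot)$ then $\lambda+E_{\lambda}=E_{-\lambda}$ is again $\mu$-null, whence $g(\cdot+\lambda)=0$ $\rho$-a.e.\ on $E_{\lambda}$; still, the polynomial route you chose is cleaner and is also what the paper does. Finally, note that the endpoint $\alpha=\infty$ remains delicate in both your proof and the paper's, since the range inclusion there rests on remark \ref{rem1} applied to the $L^{\infty}$-closure of $\mathbf{P}(\mathop{x}\limits^{\sim})$, which is a much smaller space; this weakness is inherited from the paper, not introduced by you.
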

\begin{proof}
Given $f\in L^2(\mu) \subset L^1 (\mu)$, $|(P_{\mathop{x}\limits^{\sim}}f)(	\gamma)|\leq \sum\limits_{\lambda\in\Lambda}|f(\gamma +\lambda) g(\gamma +\lambda)| $, thus by lemma \ref{l2},
$$\int\limits_{\varGamma} |P_{\mathop{x}\limits^{\sim}} f(\gamma)| d\mu(\gamma) \leq \int\limits_{T} \sum\limits_{\kappa\in\Lambda} \left({ \sum\limits_{\lambda\in\Lambda} |f(\gamma +\lambda +\kappa)| g(\gamma +\lambda+\kappa)}\right)g(\gamma +\kappa) d\rho(\gamma)\,, $$
but since $\sum\limits_{\kappa\in\Lambda} g(\cdot +\kappa)=1$ $\rho$-a.e. then the last equation equals:
$$\int\limits_{T}  { \sum\limits_{\lambda\in\Lambda} |f(\gamma +\lambda)| g(\gamma +\lambda)} d\rho(\gamma)= \int\limits_{\varGamma} |f(\gamma)| d\mu(\gamma)\,.$$
Therefore, taking into account theorem \ref{t5}, if $\alpha=1,\,2$, then  $$\left\|P_{\mathop{x}\limits^{\sim}} f\right\|_{L^{\alpha}(\mu)} \leq \left\| f \right\|_{L^{\alpha}(\mu)},\; \mathop{\rm{for}}\; f\in L^2(\mu).$$
The boundedness of $P_{\mathop{x}\limits^{\sim}}$ acting on $L^{\alpha}(\mu)$, $\,\alpha\in(1,2)$, is obtained by the Marcinkiewicz interpolation theorem and a duality argument proves the same for $\alpha\in (2,\infty)$.  From eq. \ref{proj}, if $f\in L^{\alpha}(\mu)$ and $\kappa\in \Lambda$, then it is easy to verify that $(P_{\mathop{x}\limits^{\sim}} f)(\gamma +\kappa)= \langle{\kappa,x}\rangle (P_{\mathop{x}\limits^{\sim}} f)(\gamma)$ and then $P_{\mathop{x}\limits^{\sim}} f \in C_{\alpha}\mathbf{P}(\mathop{x}\limits^{\sim})$ (See remark \ref{rem1}), therefore $P_{\mathop{x}\limits^{\sim}}(L^{\alpha}(\mu))\subseteq C_{\alpha}\mathbf{P}(\mathop{x}\limits^{\sim})$. The opposite inclusion follows since if $p\in \mathbf{P}(\mathop{x}\limits^{\sim})$, then it is straightforward to see that
$(P_{\mathop{x}\limits^{\sim}} p)(\gamma)=p(\gamma)$. 
\section*{Aknowledgements}
J.M. Medina thanks CONICET and Universidad de Buenos Aires, grant No. UBACyT 20020100100503.

\end{proof}
\newpage

\noindent L. Klotz \\ 
Fakultät für Mathematik und Informatik\\
Universität Leipzig\\
04109 Leipzig, Germany
\hfill\\
\hfill\\
  J. M. Medina\\
Inst. Argentino  de Matem\'atica ``A. P. Calder\'on''- CONICET and Universidad de Buenos Aires-Departamento de Matem\'{a}tica.\\
Saavedra 15, 3er piso (1083),
Buenos Aires, Argentina\\

\begin{thebibliography}{10}

\bibitem{Bea07}
Dodson~M.M. Beaty~M.G. and Eveson S.P.
\newblock A converse to kluv\'{a}nek's theorem.
\newblock {\em J. Fourier Anal. Appl.}, 13, 2, pp. 187-195, 2007.

\bibitem{Cohn80}
D.~L. Cohn.
\newblock {\em Measure Theory}.
\newblock Birkh{\"a}user, 1980.

\bibitem{Ezza15}
Mbekhta~M. Ezzahraoui~H. and Zerouali E.H.
\newblock Wold-type decomposition for some regular operators.
\newblock {\em J. Math. Ann. and Appl.}, 430, pp. 483-499, 2015.

\bibitem{Feld68}
J.~Feldman and F.~P. Greenleaf.
\newblock Existence of {B}orel transversals in groups.
\newblock {\em Pacific J. Math.}, 25, pp. 455-461, 1968.

\bibitem{Gren57}
U.~Grenander and M.~Rosenblatt.
\newblock {\em Statistical Analysis of Stationary Time Series}.
\newblock American Mathematical Society, 2008 (reprint of the 1957 edition).

\bibitem{Hewitt63}
E.~Hewitt and K.~A. Ross.
\newblock {\em Abstract Harmonic Analysis Volume I}.
\newblock Springer, 1963.

\bibitem{Kluva65}
Kluv\'{a}nek I.
\newblock Sampling theorem in abstract harmonic analysis.
\newblock {\em Matematicko-Fyzikalny Casopis}, 15, 1, pp. 43-47, 1965.

\bibitem{Klotz16}
L.~Klotz, J.M. Medina, and M.~Riedel.
\newblock Density of spaces of trigonometric polynomials with frequencies from
  a subgroup in ${L}^{\alpha}$ spaces.
\newblock {\em Preprint}, 2016.

\bibitem{Klotz05}
L.~Klotz and M.~Riedel.
\newblock Periodic observations of harmonizable symmetric stable processes.
\newblock {\em Probability and Math. Statistics}, 25, pp. 289-306, 2005.

\bibitem{Lee78}
A.~J. Lee.
\newblock Sampling theorems for nonstationary random processes.
\newblock {\em Trans. Amer. Math. Soc.}, 242, pp. 225-241, 1978.

\bibitem{Lloyd59}
S.P. Lloyd.
\newblock A sampling theorem for stationary (wide sense) stochastic processes.
\newblock {\em Trans. Amer. Math. Soc.}, 92, 1, pp. 1-12, 1959.

\bibitem{Morris77}
S.~A. Morris.
\newblock {\em Pontryagin Duality and the Structure of Locally Compact Abelian
  Groups}.
\newblock Cambridge University Press, 1977.

\bibitem{Sal69}
H.~Salehi.
\newblock On interpolation of $q$-variate stationary stochastic processes.
\newblock {\em Pacific J. Math}, 28, 1, pp.183-191, 1969.

\bibitem{Weron85}
A.~Weron.
\newblock Harmonizable stable processes on groups: Spectral, ergodic and
  interpolation properties.
\newblock {\em Z. Wahrscheinlichkeitstheorie verw. Gebiete}, 68, pp. 473-491,
  1985.

\end{thebibliography}
\end{document}